\begin{document}

\allowdisplaybreaks

\newif\ifdraft 
\drafttrue
\draftfalse
\newcommand{\DRAFTNUMBER}{1}
\newcommand{\DATE}{\today\ \ifdraft(Draft \DRAFTNUMBER)\fi}
\newcommand{\TITLE}{Lang's Height Conjecture and Szpiro's Conjecture}
\newcommand{\TITLERUNNING}{Lang's Height Conjecture and Szpiro's Conjecture}



\newtheorem{theorem}{Theorem}
\newtheorem{lemma}[theorem]{Lemma}
\newtheorem{conjecture}[theorem]{Conjecture}
\newtheorem{proposition}[theorem]{Proposition}
\newtheorem{corollary}[theorem]{Corollary}
\newtheorem*{claim}{Claim}

\theoremstyle{definition}
\newtheorem{question}{Question}
\renewcommand{\thequestion}{\Alph{question}} 
\newtheorem*{definition}{Definition}
\newtheorem{example}[theorem]{Example}

\theoremstyle{remark}
\newtheorem{remark}[theorem]{Remark}
\newtheorem*{acknowledgement}{Acknowledgements}



\newenvironment{notation}[0]{%
  \begin{list}%
    {}%
    {\setlength{\itemindent}{0pt}
     \setlength{\labelwidth}{4\parindent}
     \setlength{\labelsep}{\parindent}
     \setlength{\leftmargin}{5\parindent}
     \setlength{\itemsep}{0pt}
     }%
   }%
  {\end{list}}

\newenvironment{parts}[0]{%
  \begin{list}{}%
    {\setlength{\itemindent}{0pt}
     \setlength{\labelwidth}{1.5\parindent}
     \setlength{\labelsep}{.5\parindent}
     \setlength{\leftmargin}{2\parindent}
     \setlength{\itemsep}{0pt}
     }%
   }%
  {\end{list}}
\newcommand{\Part}[1]{\item[\upshape#1]}
\newcommand{\ProofPart}[1]{\par\noindent\textup{#1}\enspace\ignorespaces}

\renewcommand{\a}{\alpha}
\renewcommand{\b}{\beta}
\newcommand{\g}{\gamma}
\renewcommand{\d}{\delta}
\newcommand{\e}{\epsilon}
\newcommand{\f}{\varphi}
\newcommand{\bfphi}{{\boldsymbol{\f}}}
\renewcommand{\l}{\lambda}
\renewcommand{\k}{\kappa}
\newcommand{\lhat}{\hat\lambda}
\newcommand{\m}{\mu}
\newcommand{\bfmu}{{\boldsymbol{\mu}}}
\renewcommand{\o}{\omega}
\renewcommand{\r}{\rho}
\newcommand{\rbar}{{\bar\rho}}
\newcommand{\s}{\sigma}
\newcommand{\sbar}{{\bar\sigma}}
\renewcommand{\t}{\tau}
\newcommand{\z}{\zeta}

\newcommand{\D}{\Delta}
\newcommand{\G}{\Gamma}
\newcommand{\F}{\Phi}

\newcommand{\ga}{{\mathfrak{a}}}
\newcommand{\gA}{{\mathfrak{A}}}
\newcommand{\gb}{{\mathfrak{b}}}
\newcommand{\gB}{{\mathfrak{B}}}
\newcommand{\gc}{{\mathfrak{c}}}
\newcommand{\gC}{{\mathfrak{C}}}
\newcommand{\gd}{{\mathfrak{d}}}
\newcommand{\gD}{{\mathfrak{D}}}
\newcommand{\gI}{{\mathfrak{I}}}
\newcommand{\gm}{{\mathfrak{m}}}
\newcommand{\gn}{{\mathfrak{n}}}
\newcommand{\go}{{\mathfrak{o}}}
\newcommand{\gO}{{\mathfrak{O}}}
\newcommand{\gp}{{\mathfrak{p}}}
\newcommand{\gP}{{\mathfrak{P}}}
\newcommand{\gq}{{\mathfrak{q}}}
\newcommand{\gR}{{\mathfrak{R}}}

\newcommand{\Abar}{{\bar A}}
\newcommand{\Ebar}{{\bar E}}
\newcommand{\Kbar}{{\bar K}}
\newcommand{\Pbar}{{\bar P}}
\newcommand{\Sbar}{{\bar S}}
\newcommand{\Tbar}{{\bar T}}
\newcommand{\ybar}{{\bar y}}
\newcommand{\phibar}{{\bar\f}}

\newcommand{\Acal}{{\mathcal A}}
\newcommand{\Bcal}{{\mathcal B}}
\newcommand{\Ccal}{{\mathcal C}}
\newcommand{\Dcal}{{\mathcal D}}
\newcommand{\Ecal}{{\mathcal E}}
\newcommand{\Fcal}{{\mathcal F}}
\newcommand{\Gcal}{{\mathcal G}}
\newcommand{\Hcal}{{\mathcal H}}
\newcommand{\Ical}{{\mathcal I}}
\newcommand{\Jcal}{{\mathcal J}}
\newcommand{\Kcal}{{\mathcal K}}
\newcommand{\Lcal}{{\mathcal L}}
\newcommand{\Mcal}{{\mathcal M}}
\newcommand{\Ncal}{{\mathcal N}}
\newcommand{\Ocal}{{\mathcal O}}
\newcommand{\Pcal}{{\mathcal P}}
\newcommand{\Qcal}{{\mathcal Q}}
\newcommand{\Rcal}{{\mathcal R}}
\newcommand{\Scal}{{\mathcal S}}
\newcommand{\Tcal}{{\mathcal T}}
\newcommand{\Ucal}{{\mathcal U}}
\newcommand{\Vcal}{{\mathcal V}}
\newcommand{\Wcal}{{\mathcal W}}
\newcommand{\Xcal}{{\mathcal X}}
\newcommand{\Ycal}{{\mathcal Y}}
\newcommand{\Zcal}{{\mathcal Z}}

\renewcommand{\AA}{\mathbb{A}}
\newcommand{\BB}{\mathbb{B}}
\newcommand{\CC}{\mathbb{C}}
\newcommand{\FF}{\mathbb{F}}
\newcommand{\GG}{\mathbb{G}}
\newcommand{\NN}{\mathbb{N}}
\newcommand{\PP}{\mathbb{P}}
\newcommand{\QQ}{\mathbb{Q}}
\newcommand{\RR}{\mathbb{R}}
\newcommand{\ZZ}{\mathbb{Z}}

\newcommand{\bfa}{{\mathbf a}}
\newcommand{\bfb}{{\mathbf b}}
\newcommand{\bfc}{{\mathbf c}}
\newcommand{\bfe}{{\mathbf e}}
\newcommand{\bff}{{\mathbf f}}
\newcommand{\bfg}{{\mathbf g}}
\newcommand{\bfp}{{\mathbf p}}
\newcommand{\bfr}{{\mathbf r}}
\newcommand{\bfs}{{\mathbf s}}
\newcommand{\bft}{{\mathbf t}}
\newcommand{\bfu}{{\mathbf u}}
\newcommand{\bfv}{{\mathbf v}}
\newcommand{\bfw}{{\mathbf w}}
\newcommand{\bfx}{{\mathbf x}}
\newcommand{\bfy}{{\mathbf y}}
\newcommand{\bfz}{{\mathbf z}}
\newcommand{\bfA}{{\mathbf A}}
\newcommand{\bfF}{{\mathbf F}}
\newcommand{\bfB}{{\mathbf B}}
\newcommand{\bfD}{{\mathbf D}}
\newcommand{\bfG}{{\mathbf G}}
\newcommand{\bfI}{{\mathbf I}}
\newcommand{\bfM}{{\mathbf M}}
\newcommand{\bfzero}{{\boldsymbol{0}}}

\newcommand{\Adele}{\textsf{\upshape A}}
\newcommand{\Aut}{\operatorname{Aut}}
\newcommand{\Bern}{\mathbb{B}} 
\newcommand{\Br}{\operatorname{Br}}  
\newcommand{\Cond}{\mathfrak{F}}
\newcommand{\Disc}{\mathfrak{D}}
\newcommand{\density}{{\boldsymbol\delta}}
\newcommand{\densitysup}{\overline{\density}}
\newcommand{\densityinf}{\underline{\density}}
\newcommand{\ds}{\displaystyle}
\newcommand{\dsum}{\displaystyle\sum}
\newcommand{\Div}{\operatorname{Div}}
\renewcommand{\div}{{\operatorname{div}}}
\newcommand{\End}{\operatorname{End}}
\newcommand{\Fbar}{{\bar{F}}}
\newcommand{\FOD}{\textup{FOM}}
\newcommand{\FOM}{\textup{FOD}}
\newcommand{\Gal}{\operatorname{Gal}}
\newcommand{\GL}{\operatorname{GL}}
\newcommand{\Index}{\operatorname{Index}}
\newcommand{\Image}{\operatorname{Image}}
\newcommand{\liftable}{{\textup{liftable}}}
\newcommand{\hhat}{{\hat h}}
\newcommand{\Ksep}{K^{\textup{sep}}}
\newcommand{\Ker}{{\operatorname{ker}}}
\newcommand{\Lsep}{L^{\textup{sep}}}
\newcommand{\Lift}{\operatorname{Lift}}
\newcommand{\pp}{\operatorname{pp}}  
\newcommand{\vlim}{\operatornamewithlimits{\text{$v$}-lim}}
\newcommand{\wlim}{\operatornamewithlimits{\text{$w$}-lim}}
\newcommand{\MOD}[1]{~(\textup{mod}~#1)}
\newcommand{\Norm}{{\operatorname{\mathsf{N}}}}
\newcommand{\notdivide}{\nmid}
\newcommand{\normalsubgroup}{\triangleleft}
\newcommand{\odd}{{\operatorname{odd}}}
\newcommand{\onto}{\twoheadrightarrow}
\newcommand{\Orbit}{\mathcal{O}}
\newcommand{\ord}{\operatorname{ord}}
\newcommand{\Per}{\operatorname{Per}}
\newcommand{\PrePer}{\operatorname{PrePer}}
\newcommand{\PGL}{\operatorname{PGL}}
\newcommand{\Pic}{\operatorname{Pic}}
\newcommand{\Prob}{\operatorname{Prob}}
\newcommand{\Qbar}{{\bar{\QQ}}}
\newcommand{\rank}{\operatorname{rank}}
\newcommand{\Resultant}{\operatorname{Res}}
\renewcommand{\setminus}{\smallsetminus}
\newcommand{\SL}{\operatorname{SL}}
\newcommand{\Span}{\operatorname{Span}}
\newcommand{\Spec}{\operatorname{Spec}}
\newcommand{\tors}{{\textup{tors}}}
\newcommand{\Trace}{\operatorname{Trace}}
\newcommand{\twistedtimes}{\mathbin{%
   \mbox{$\vrule height 6pt depth0pt width.5pt\hspace{-2.2pt}\times$}}}
\newcommand{\UHP}{{\mathfrak{h}}}    
\newcommand{\Wreath}{\operatorname{Wreath}}
\newcommand{\<}{\langle}
\renewcommand{\>}{\rangle}

\newcommand{\longhookrightarrow}{\lhook\joinrel\longrightarrow}
\newcommand{\longonto}{\relbar\joinrel\twoheadrightarrow}

\newcounter{CaseCount}
\Alph{CaseCount}
\def\Case#1{\par\vspace{1\jot}\noindent
\stepcounter{CaseCount}
\framebox{Case \Alph{CaseCount}.\enspace#1}
\par\vspace{1\jot}\noindent\ignorespaces}

\title[\TITLERUNNING]{\TITLE}
\date{\DATE}

\author[Joseph H. Silverman]{Joseph H. Silverman}
\email{jhs@math.brown.edu}
\address{Mathematics Department, Box 1917
         Brown University, Providence, RI 02912 USA}
\subjclass{Primary: 11G05; Secondary:  11G50, 11J97, 14H52}
\keywords{elliptic curve, canonical height, Szpiro conjecture, Lang conjecture}
\thanks{The author's research supported by NSF grant DMS-0650017 and
DMS-0854755.}

\begin{abstract}
It is known that Szpiro's conjecture, or equivalently the
$ABC$-conjecture, implies Lang's conjecture giving a uniform lower
bound for the canonical height of nontorsion points on elliptic
curves. In this note we show that a significantly weaker version of
Szpiro's conjecture, which we call ``prime-depleted,'' suffices to
prove Lang's conjecture.
\end{abstract}


\maketitle

\section*{Introduction}

Let $E/K$ be an elliptic curve defined over a number field, let $P\in
E(K)$ be a nontorsion point on~$E$, and write~$\Disc(E/K)$
and~$\Cond(E/K)$ for the discriminant and the conductor of~$E/K$. In
this paper we discuss the relationship between the following
conjectures of Serge Lang~\cite[page~92]{LangECDA} and Lucien
Szpiro~(1983).

\begin{conjecture}[Lang Height Conjecture]
There are constants $C_1=C_1(K)>0$ and~$C_2=C_2(K)$ such that the canonical
height of~$P$ is bounded below by
\[
  \hhat(P) \ge C_1\log\Norm_{K/\QQ}\Disc(E/K) - C_2.
\]
\end{conjecture}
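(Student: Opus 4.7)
The plan is to derive Lang's bound from Szpiro's conjecture---or the prime-depleted variant introduced in this paper---by decomposing the canonical height into local contributions and combining local lower bounds with the Szpiro comparison of $\log|\Disc|$ and $\log|\Cond|$.

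The starting point is the N\'eron decomposition
\[
  \hhat(P) = \frac{1}{[K:\QQ]}\sum_v n_v\lhat_v(P).
\]
At each non-archimedean place $v$ of multiplicative reduction I would invoke the Tate $q$-parameterization: with Tate parameter $q_v$ satisfying $\ord_v(q_v)=\ord_v\Disc(E/K)$, the local canonical height of $P$ takes the shape
\[
  \lhat_v(P) \;=\; \tfrac12 B_2(\alpha_v)\log|q_v|_v^{-1} \;+\; O(1),
\]
where $B_2$ is the second Bernoulli polynomial and $\alpha_v\in[0,1)$ records the component of the N\'eron model on which $P$ reduces. The absolute value $|\lhat_v(P)|$ is thus comparable to $\ord_v\Disc(E/K)\cdot\log|k_v|$, but the sign depends on $\alpha_v$.

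To extract a uniform \emph{lower} bound I would pass to a small integer multiple $mP$ chosen so that $\alpha_v$ lies in a favorable sub-interval at every bad prime. For places of additive reduction, Kodaira's classification bounds the component group by an absolute constant, so a $K$-dependent choice of $m$ suffices. For multiplicative reduction the component group has order $\ord_v\Disc$, so the selection of a single $m$ working simultaneously at all such $v$ requires a pigeonhole or equidistribution argument over $m\in\{1,\dots,M\}$. Archimedean places contribute only $O_K(1)$ via the theta-series expansion of $\lhat_v$, and non-archimedean places of good reduction contribute non-negatively.

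The main obstacle is precisely the simultaneous control at the multiplicative primes: an adversarial $P$ may hit the ``middle'' of the component group at every bad prime, creating large negative local contributions that cancel the positive ones. Szpiro's conjecture, by bounding $\log\Norm_{K/\QQ}\Disc(E/K)$ in terms of $\log\Norm_{K/\QQ}\Cond(E/K)$, forces the distribution of bad-reduction data to be sufficiently mild that a favorable multiplier $m$ can be chosen of uniformly bounded size; the prime-depleted variant should suffice because it allows a thin exceptional set of primes to be discarded at negligible cost to the sum. Assembling the contributions would then yield $\hhat(P)\gg\log\Norm_{K/\QQ}\Disc(E/K)-O_K(1)$, which is the desired bound.
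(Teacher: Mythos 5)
Your sketch has the right ingredients --- N\'eron local decomposition of $\hhat$, Tate's formula with the periodic Bernoulli polynomial $\Bern$ at split multiplicative places, a pigeonhole over multipliers --- but the key mechanism is missing and two of your supporting assertions are wrong. You propose to choose a single multiplier $m\le M$ that lands $mP$ on a favorable component at \emph{every} multiplicative prime by ``a pigeonhole or equidistribution argument.'' Since the number of split multiplicative primes is unbounded, a pigeonhole over all of them forces $M$ to be unbounded, and the factor $m^2$ coming from $\hhat(mP)=m^2\hhat(P)$ then destroys the bound. The paper avoids this by splitting the work into two distinct mechanisms. A pigeonhole in the style of \cite{MR630588} is applied only to the $d=[K:\QQ]$ archimedean places together with the $\le J$ primes dividing $\gD_2$, producing a multiplier $k\le(6M)^{J+d}$. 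At the remaining primes, those dividing $\gD_1$, no favorable multiplier is selected at all: instead one forms the Fej\'er-weighted average $\sum_{m=1}^M\bigl(1-\tfrac{m}{M+1}\bigr)\lhat(mkP;v)$, and the Fourier expansion $\Bern(t)=\pi^{-2}\sum_{n\ge1}n^{-2}\cos(2\pi nt)$ together with the Fej\'er kernel identity bounds the \emph{sum} over $\gp_v\mid\gD_1$ of these averages from below by $\tfrac{1}{24}\log\Norm_{K/\QQ}\gD_1$, once $M+1$ is taken proportional to $\sum n_v\log\Norm_{K/\QQ}\gp_v/\sum n_v^{-1}\log\Norm_{K/\QQ}\gp_v$. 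That ratio is controlled by Jensen's inequality, giving $M+1\le 2\s_J(\gD(E/K))^2$. Without the Fourier averaging there is no point of entry for the Szpiro ratio, and no way to keep $M$ from depending on the number of prime divisors of $\Disc(E/K)$.

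Two subsidiary errors. The archimedean local height is not $O_K(1)$: it can be negative of size comparable to $\log\max\{|j(E)|_v,1\}$, and the pigeonhole step is exactly what converts it into the positive lower bound $c_1\log\max\{|j(E)|_v,1\}-c_2$ needed to recover $h(j(E))$ in the final assembly. And the intuition that the prime-depleted version lets you discard a thin exceptional set ``at negligible cost to the sum'' misreads the role of $J$: the $\le J$ primes of $\gD_2$ are not dropped from the height estimate at all --- they are precisely the ones absorbed into the pigeonhole, which is why $J$ appears in the exponent of $(6M)^{J+d}$.
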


\begin{conjecture}[Szpiro Conjecture]
There are constants~$C_3$ and~$C_4=C_4(K)$ such that
\[
  \log\Norm_{K/\QQ}\Disc(E/K)
  \le C_3\log\Norm_{K/\QQ}\Cond(E/K) + C_4.
\]
\end{conjecture}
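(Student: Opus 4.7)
The statement to be attacked is Szpiro's Conjecture itself, one of the central open problems in arithmetic geometry and, up to the precise form of the constants, equivalent to the $ABC$-conjecture of Masser and Oesterl\'e. Since there is no known unconditional proof, what follows is necessarily a discussion of the main avenues rather than a genuine sketch.

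The most classical route is via the Frey--Hellegouarch correspondence between coprime additive triples and elliptic curves. Given a coprime solution to $a+b=c$ over $\ZZ$, the Frey curve $E:y^{2}=x(x-a)(x+b)$ has minimal discriminant essentially $(abc)^{2}$ (up to small $2$-adic factors) and conductor essentially the radical $\prod_{p\mid abc}p$. Consequently a Szpiro-type bound on the Frey family is equivalent to an $ABC$-type inequality on the triples, and for a general curve $E/K$ the desired bound can be extracted from $ABC$ applied to the $S$-unit relations satisfied by the Weierstrass coefficients and the $j$-invariant. Thus the plan reduces to proving the $ABC$-conjecture over~$K$.

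For $ABC$ itself, the three principal programs are: (i)~Mochizuki's inter-universal Teichm\"uller theory, which has been offered as an unconditional proof but is not yet accepted by the broader community; (ii)~Bombieri--Vojta-style Diophantine approximation arguments on moduli spaces of curves, which to date yield conditional or partial statements; and (iii)~Arakelov-theoretic self-intersection inequalities for the relative dualizing sheaf on an arithmetic surface, with the contributions from places of bad reduction controlled in terms of the conductor.

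The hard part in every approach is obtaining a \emph{uniform} inequality across all elliptic curves (equivalently, across all coprime triples): broad infinite families and special cases fall to existing techniques, but controlling the worst-case behaviour requires a height inequality on an appropriate moduli space that no current method has been shown to supply unconditionally. This is precisely the obstacle the present paper sidesteps: rather than proving the full Szpiro Conjecture, the author introduces a weaker ``prime-depleted'' variant and shows that it is already strong enough to imply Lang's height conjecture.
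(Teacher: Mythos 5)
You are quite right that there is nothing to prove here: the statement is Szpiro's Conjecture, which the paper explicitly presents as an open conjecture (alongside Lang's height conjecture) and makes no attempt to establish. The paper's actual contribution runs in the other direction --- it introduces a \emph{weaker} ``prime-depleted'' variant of Szpiro and shows that variant still suffices to imply Lang's height conjecture. Your survey of the Frey--Hellegouarch reduction to $ABC$ and of the main lines of attack (Mochizuki's IUT, Diophantine-approximation arguments, Arakelov-theoretic inequalities) is accurate as far as it goes, and your closing observation correctly identifies what the paper is actually doing. Since the task asked for a proof of a statement that the paper does not and cannot prove, acknowledging the conjectural status and pointing to the paper's actual contribution is the appropriate response, and you did that.
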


In~\cite{MR948108} Marc Hindry and the author proved that Szpiro's
conjecture implies Lang's height
conjecture. (See~\cite{MR1450488,MR2259240} for improved constants.)
It is thus tempting to try to prove the opposite implication, i.e.,
prove that Lang's height conjecture implies Szpiro's conjecture.
Since Szpiro's conjecture is easily seen to be imply the
$ABC$-conjecture of Masser and Oesterl\'e~\cite{Oesterle} (with some
exponent), such a proof would be of interest.  \par It is the purpose
of this note to explain how the pigeonhole argument in~\cite{MR630588}
may be combined with the Fourier averaging methods in~\cite{MR948108}
to prove Lang's height conjecture using a weaker from of Szpiro's
conjecture. Roughly speaking, the ``prime-depleted'' version of
Szpiro's conjecture that we require allows one to discard a bounded
number of primes from~$\Disc(E/K)$ and~$\Cond(E/K)$ before comparing
them. 
\par
We briefly summarize the contents of this paper.  In
Section~\ref{section:pdszpiro} we describe the prime-depleted Szpiro
conjecture and prove that it implies Lang's height conjecture.
Section~\ref{section:properties} contains various elementary
properties of the prime-depleted Szpiro ratio. Finally, in
Section~\ref{section:abc} we state a prime-depleted $ABC$-conjecture
and show that it is a consquence of the prime-depleted Szpiro
conjecture.

\section{The Prime-Depleted Szpiro Conjecture}
\label{section:pdszpiro}

We begin with some definitions. 

\begin{definition}
Let~$\gD$ be an integral ideal and factor
$\gD=\prod\gp^{e_i}$ as a product of prime powers. We write $\nu(\gD)$
for the number of factors in the product, i.e.,~$\nu(\gD)$ is the
number of distinct prime ideals dividing~$\gD$.  The \emph{Szpiro
ratio of~$\gD$} is the quantity
\[
  \s(\gD) = \frac{\log\Norm_{K/\QQ}\gD}{\log\Norm_{K/\QQ}\prod_i\gp_i}
  = \frac{\sum e_i\log\Norm_{K/\QQ}\gp_i}{\sum \log\Norm_{K/\QQ}\gp_i}.
\]
(If $\gD=(1)$, we set $\s(\gD)=1$.)
More generally, for any integer~$J\ge0$, the \emph{$J$-depleted
Szpiro ratio of~$\gD$} is defined as follows:
\[
  \s_J(\gD) = \min_{\substack{I\subset\{1,2,\ldots,\nu(\gD)\}\\
        \#I\ge\nu(\gD)-J\\}} 
        \s\biggl(\prod_{i\in I} \gp_i^{e_i}\biggr).
\]
Thus~$\s_J(\gD)$ is the smallest value that can be obtained by
removing up to~$J$ of the prime powers divided~$\gD$ before computing
the Szpiro ratio.  We note that if $\nu(\gD)\le J$, then $\s_J(\gD)=1$
by definition.
\end{definition}

\begin{example}
\[
  \s_0(1728)=\frac{\log1728}{\log6}\approx4.16,
  \quad
  \s_1(1728)=\frac{\log27}{\log3} = 3,
  \quad
  \s_2(1728)=1.
\]
\end{example}

\begin{conjecture}[Prime-Depleted Szpiro Conjecture]
Let $K/\QQ$ be a number field.  There exist an integer $J\ge0$ and a
constant $C_3$, depending only on~$K$, such that for all elliptic
curves~$E/K$,
\[
  \s_J\bigl(\Disc(E/K)\bigr) \le C_3.
\]
\end{conjecture}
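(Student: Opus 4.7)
The conjecture is by construction weaker than the full Szpiro Conjecture: since $\s_J(\gD)\le\s_0(\gD)=\s(\gD)$ for every $\gD$ and every $J\ge0$, the full Szpiro Conjecture with constant $C_3$ immediately implies the $J=0$ case of the Prime-Depleted Szpiro Conjecture with the same constant. Any proof strategy therefore divides into two routes: establish the full Szpiro Conjecture directly, or exploit the freedom to discard up to $J$ prime powers to produce a strictly easier problem.

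For a route that attempts to exploit the depletion, my plan is to identify a set of at most $J=J(K)$ anomalous primes of $\Disc(E/K)$ whose removal already yields a uniformly bounded Szpiro ratio. Natural candidates are the primes of $K$ lying above $2$ and $3$, where Ogg's conductor formula permits a wild contribution to $\ord_\gp(\Disc(E/K))$ that is uncontrolled by the conductor exponent, together with any exceptional primes where the local ratio $\ord_{\gp_i}(\Disc)/\log\Norm_{K/\QQ}\gp_i$ is especially large. The number of small-residue-characteristic primes is bounded by a constant depending only on $K$, so these cost only a bounded number of deletions. One would then seek to bound the weighted average
\[
 \frac{\sum_{i\in I} e_i\log\Norm_{K/\QQ}\gp_i}{\sum_{i\in I} \log\Norm_{K/\QQ}\gp_i}
\]
over the surviving prime powers.

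The hardest step, and the point at which this strategy collapses to the full Szpiro Conjecture, lies at primes of \emph{multiplicative} reduction of residue characteristic at least $5$. There $e_i=-\ord_{\gp_i}(j(E))$ can be arbitrarily large, and bounding the weighted average of these exponents over a set whose cardinality grows with $E$ is precisely the semistable case of Szpiro's Conjecture; this is not known to follow from any estimate that holds merely after deleting boundedly many primes. Consequently, a genuinely unconditional proof of the Prime-Depleted Szpiro Conjecture, beyond the trivial deduction from Szpiro, would require either a full proof of Szpiro (via the $ABC$-conjecture, modular methods, or a framework such as Mochizuki's) or a fundamentally new idea that extracts genuine content from the ability to discard $J$ primes — and it is the latter possibility that motivates formulating the conjecture in this prime-depleted form in the first place, since the subsequent sections of the paper will show that this apparently minor weakening still suffices to recover Lang's height inequality.
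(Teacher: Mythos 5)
This statement is a \emph{conjecture}, not a theorem; the paper states it without proof, and no proof is known. There is therefore no ``paper's own proof'' against which to compare your attempt. You recognized this correctly: your writeup is not a proof but an analysis of why a proof is out of reach, which is the right reading of the situation.

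Your analysis is sound on the substantive points. The observation that $\s_J(\gD)\le\s_0(\gD)$ makes the prime-depleted conjecture a formal consequence of the classical Szpiro conjecture, and the paper's Proposition~\ref{proposition:sproperties}(a) confirms the monotonicity $\s_0\ge\s_1\ge\cdots$ that you use. You also put your finger on exactly why the depletion buys nothing obvious toward an unconditional proof: at a prime $\gp$ of multiplicative reduction with residue characteristic $\ge 5$, the exponent $\ord_\gp(\Disc)$ is $-\ord_\gp(j)$, which is unbounded, and the number of such primes grows with $E$, so deleting a fixed finite set cannot control the weighted average in general. This is precisely the semistable case of Szpiro, and nothing in the prime-depleted framework circumvents it. The paper itself tacitly agrees — in the final Remark it notes that, as far as the author can tell, \emph{none} of the standard consequences of Szpiro/$ABC$ follow from the prime-depleted version, suggesting the depleted conjecture really is qualitatively weaker. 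The only thing I would add is a framing caution: since the statement is a conjecture, any attempt to ``prove'' it should be clearly labeled as speculative, which you do in your closing paragraph; the point of the paper is not to prove the conjecture but to show (in the Theorem of Section~\ref{section:pdszpiro}) that this weaker hypothesis still yields Lang's height bound.
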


It is clear from the definition that $\s_0(\gD)=\s(\gD)$.  An
elementary argument (Proposition~\ref{proposition:sproperties}) shows
that the value of~$\s_J$ decreases as~$J$ increases,
\[
  \s_0(\gD)\ge\s_1(\gD)\ge\s_2(\gD)\ge\cdots\,.
\]
Hence the prime-depleted Szpiro conjecture is weaker than the
classical version, which says that~$\s_0\bigl(\Disc(E/K)\bigr)$ is
bounded independent of~$E$.  Before stating our main result, we need
one further definition.

\begin{definition}
Let $E/K$ be an elliptic curve defined over a number field.
The \emph{height of $E/K$} is the quantity
\[
  h(E/K) = \max\bigl\{h\bigl(j(E)\bigr),
     \log\Norm_{K/\QQ}\Disc(E/K)\bigr\}.
\]
For a given field~$K$, there are only finitely many elliptic
curves~$E/K$ of bounded height, although there may be infinitely many
elliptic curves of bounded height defined over fields of bounded
degree~\cite{MR953747}.
\end{definition}

We now state our main result, which implies that the $J$-depleted
Szpiro conjecture implies Lang's height conjecture.

\begin{theorem}
Let $K/\QQ$ be a number field, let $J\ge1$ be an integer, let~$E/K$ be
an elliptic curve, and let $P\in E(K)$ be a nontorsion point.  There
are constants $C_1>0$ and $C_2$, depending only on~$[K:\QQ]$,~$J$, and
the $J$-depleted Szpiro ratio~$\s_J\bigl(\Disc(E/K)\bigr)$, such that
\[
  \hhat(P) \ge C_1h(E/K) - C_2.
\]
In particular, the depleted Szpiro conjecture implies Lang's height
conjecture.
\end{theorem}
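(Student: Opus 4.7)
The plan is to adapt the Hindry--Silverman proof~\cite{MR948108} that Szpiro implies Lang. That argument's Fourier averaging breaks into local contributions prime-by-prime, so the only obstacle to using $J$-depleted Szpiro in place of Szpiro is the at most~$J$ primes of~$\Disc(E/K)$ that the~$\s_J$-ratio discards. These exceptional primes will be absorbed by a pigeonhole mechanism in the spirit of~\cite{MR630588}.

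Following~\cite{MR948108}, I would start from the identity
\[
  \hhat(P)\cdot\sum_{n=1}^M n^2=\sum_{n=1}^M\sum_v\lhat_v(nP).
\]
At each archimedean~$v$, Fourier expansion of $\lhat_v$ on the complex torus $E(\CC_v)$ together with orthogonality of characters yields a lower bound whose dominant term in~$M$ is proportional to $M^3$ times a positive archimedean contribution to $h(E/K)$.  At each non-archimedean prime~$\mathfrak p$ of multiplicative reduction, Tate's formula gives $\lhat_\mathfrak p(nP)=\tfrac12 B_2(\{n\a_\mathfrak p\})\ord_\mathfrak p\bigl(\Disc(E/K)\bigr)\log\Norm_{K/\QQ}\mathfrak p$, and the trivial bound $B_2\ge-\tfrac1{12}$ bounds this prime's total contribution to $\sum_n\sum_v\lhat_v(nP)$ in absolute value by $(M/24)\ord_\mathfrak p(\Disc)\log\Norm_{K/\QQ}\mathfrak p$. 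Summing over the primes retained by the $\s_J$-ratio and invoking $\s_J(\Disc(E/K))\le C_3$, their aggregate negative contribution is controlled by $C_3(M/24)\log\Norm_{K/\QQ}\Cond(E/K)$, exactly as in~\cite{MR948108}.

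For the $J$ discarded primes $\mathfrak p_1,\ldots,\mathfrak p_J$ the $\s_J$-hypothesis provides no control, and the local orders $\ord_{\mathfrak p_i}(\Disc)$ may be arbitrarily large. Here the pigeonhole device of~\cite{MR630588} produces a multiple $nP$ with $1\le n\le C(J,[K:\QQ])$ whose reduction at each~$\mathfrak p_i$ lies in the identity component of the N\'eron model, so that $\lhat_{\mathfrak p_i}(nP)$ is bounded only in terms of~$[K:\QQ]$. Applying the Hindry--Silverman lower bound to~$nP$ in place of~$P$ and using $\hhat(nP)=n^2\hhat(P)$ then delivers the theorem. The main technical obstacle is to pin down the pigeonhole constant $C(J,[K:\QQ])$: a naive pigeonhole into $\prod_i\Phi_{\mathfrak p_i}$ gives a bound in terms of the unbounded component orders, whereas the bound needed for the theorem must depend only on~$J$ and~$[K:\QQ]$. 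Following the device of~\cite{MR630588}, this is achieved by refining the pigeonhole to target not the full component groups but a finite quotient determined by bounded auxiliary data, leveraging the mean-zero periodicity of $B_2(\{n\a\})$ over a full period to average away the discarded primes' contribution even when the pointwise per-prime bound is weak.
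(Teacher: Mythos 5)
Your overall architecture --- split $\Disc(E/K)$ into the $\s_J$-optimal part $\gD_1$ and the $\le J$ discarded prime powers $\gD_2$, control $\gD_1$ with Hindry--Silverman averaging, and absorb $\gD_2$ with a pigeonhole in the spirit of~\cite{MR630588} --- matches the paper. But three of your quantitative claims are wrong in ways that break the argument rather than just weaken constants.

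First, you use the trivial bound $\Bern\ge-\tfrac1{12}$ on the retained primes and assert this is ``exactly as in~\cite{MR948108}.'' It is not, and this is the heart of the matter. The trivial bound gives $\sum_{m\le M}(1-\tfrac{m}{M+1})\lhat(mP;v)\ge -c\,M\,n_v\log\Norm_{K/\QQ}\gp_v$, so summing over $\gp_v\mid\gD_1$ produces a \emph{negative} term of size $\asymp M\log\Norm_{K/\QQ}\gD_1$. The entire point of the Fej\'er-kernel averaging in~\cite{MR948108} and in this paper is to beat the trivial bound by a factor of $M$: expanding $\Bern$ in Fourier series and isolating the frequencies divisible by $n_v$ yields $\sum_{m\le M}(1-\tfrac{m}{M+1})\lhat(mP;v)\ge(\tfrac{M+1}{24 n_v^2}-\tfrac1{24})\log\Norm_{K/\QQ}\gp_v^{n_v}$, and then one \emph{chooses} $M+1\asymp\s(\gD_1)^2$ so that the sum over $\gp_v\mid\gD_1$ is $\ge\tfrac1{24}\log\Norm_{K/\QQ}\gD_1>0$. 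With the trivial bound there is nothing left to compensate the negative term: the archimedean contribution is only $O\bigl(M\,h(j(E))\bigr)$, not $O(M^3 h)$ as you claim (each $\lhat(mkP;v)$ at an archimedean $v$ is bounded below by $c_1\log\max\{|j(E)|_v,1\}-c_2$, a per-multiple bound, so the weighted sum over $m\le M$ is of order $M$), and $h(j(E))$ does not dominate $\log\Norm_{K/\QQ}\gD_1$.

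Second, the pigeonhole bound you need is not of the form $n\le C(J,[K:\QQ])$, and no refinement yields that. In the paper one pigeonholes $(kP)_k$ into boxes of side $\asymp 1/M$ in $(\RR/\ZZ)^{J+d}$ (covering the $J$ discarded primes and the archimedean places), precisely because the Fej\'er averaging forces you to control $\lhat(mkP;v)$ for \emph{all} $1\le m\le M$, not just $m=1$. This yields $k\le(6M)^{J+d}$, and since $M+1\le2\s_J(\Disc(E/K))^2$, the resulting constant $C_1$ depends on $\s_J(\Disc(E/K))$ as well as on $J$ and $[K:\QQ]$ --- exactly as stated in the theorem. Your proposal to land $nP$ in the identity component at the discarded primes with $n$ bounded only in terms of $J$ and $[K:\QQ]$ is impossible, since the component group orders $n_{\gp_i}$ are unbounded; the ``mean-zero periodicity'' heuristic you invoke does not produce the required lower bound because the Fej\'er average at a prime with $n_v>\sqrt{M+1}$ is genuinely negative and the discarded primes have no $\s_J$-control on $n_v$. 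In short: you must let the pigeonhole parameter grow with $M$, and you must use the Fej\'er gain at the retained primes; both are missing from the proposal.
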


\begin{remark}
As in~\cite{MR2259240}, it is not hard to give explicit expressions
for~$C_1$ and~$C_2$ in terms of~$[K:\QQ]$,~$J$,
and~$\s_J\bigl(\Disc(E/K)\bigr)$, but we will not do so here. In terms
of the dependence on the Szpiro ratio, probably the best that comes
out of a careful working of the proof is something like
\[
  C_1 \gg\ll \s_J\bigl(\Disc(E/K)\bigr)^{cJ}
\]
for some absolute constant~$c$.  But until the (depleted) Szpiro
conjecture is proven or a specific application arises, such explict
expressions seem of limited utility.
\end{remark}

\begin{proof}
We refer the reader to~\cite[Chapter~6]{MR1312368} for basic material
on canonical local heights on elliptic curves.
Replacing~$P$ with~$12P$, we may assume without loss of generality
that the local height satisfies 
\[
  \lhat(P;v)\ge\frac{1}{12}\log\Norm_{K/\QQ}\Disc(E/K)
\]
for all nonarchimedean places~$v$ at which~$E$ does not have split
multiplicative reduction.  We factor the discriminant $\Dcal(E/K)$
into a product
\[
  \gD(E/K) = \gD_1\gD_2
  \quad\text{with}\quad
  \nu(\gD_2)\le J
  \quad\text{and}\quad
  \s_J\bigl(\gD(E/K)\bigr)=\s(\gD_1).
\]
We also choose an integer~$M\ge1$ whose value will be specified later,
and for convenience we let $d=[K:\QQ]$.
\par
Using a pigeon-hole principle argument as described
in~\cite{MR630588}, we can find an integer~$k$ with
\[
  1 \le k \le (6M)^{J+d}
\]
such that for all $1\le m\le M$ we have
\begin{align*}
  \lhat(mkP;v) &\ge c_1 \log\max\bigl\{ |j(E)|_v , 1 \bigr\}  -  c_2
  \quad\text{for all $v\in M_K^\infty$,} \\
  \lhat(mkP;v) &\ge c_3 \log\bigl|\Norm_{K/\QQ}\gD(E/K)\bigr|_v^{-1}
  \quad\text{for all $v\in M_K^0$ with $\gp_v\mid\gD_2$.}
\end{align*}
(Here and in what follows, $c_1,c_2,\dots$ are absolute positive
constants.) Roughly speaking, we need to force $J+d$ local heights
to be positive for all~$mP$ with $1\le m\le M$, which is why we
may need to take~$k$ as large as~$O(M)^{J+d}$.
\par
We next use the Fourier averaging technique described in~\cite{MR948108};
see also~\cite{MR1104702,MR2259240}.
Let $\gp_v\mid\gD_1$ be a prime at which~$E$ has split multiplicative
reduction. The group of components of the special fiber of the N\'eron
model of~$E$ at~$v$ is a cyclic group of order
\[
  n_v = \ord_v\bigl(\gD(E/K)\bigr),
\]
and we let $0\le a_v(P)<n$ be the component that is hit by~$P$.  (In
practice, there is no prefered orientation to the cyclic group of
components, so~$a_v(P)$ is only defined up to~$\pm1$. This will not
affect our computations.)  The formula for the local height at a split
multiplicative place (due to Tate, see~\cite[VI.4.2]{MR1312368})
says that
\[
  \lhat(P;v) \ge \frac{1}{2}\Bern\left(\frac{a_v(P)}{n_v}\right)
             \log\Norm_{K/\QQ}\gp_v^{n_v}.
\]
In this formula,~$\Bern(t)$ is the periodic second Bernoulli
polynomial, equal to~$t^2-t+\frac16$ for $0\le t\le 1$ and extended
periodically modulo~$1$.  As in~\cite{MR948108}, we are going to take a
weighted sum of this formula over~$mP$ for $1\le m\le M$.
\par
The periodic Bernoulli polynomial has a Fourier expansion
\[
  \Bern(t) = \frac{1}{2\pi^2}
         \sum_{\substack{n\ge1\\n\ne0\\}} \frac{e^{2\pi i nt}}{n^2}
    = \frac{1}{\pi^2} \sum_{n=1}^\infty \frac{\cos(2\pi nt)}{n^2}.
\]
We also use the formula (Fej\'er kernel)
\[
  \sum_{m=1}^M \left(1-\frac{m}{M+1}\right) \cos(mt)
  = \frac{1}{2(M+1)}\biggl|\sum_{m=0}^M e^{imt}\biggr|^2 - \frac{1}{2}.
\]
Hence
\begin{align*}
  \sum_{m=1}^M &\left(1-\frac{m}{M+1}\right)\lhat(mP;v) \\*
  &\ge \sum_{m=1}^M \left(1-\frac{m}{M+1}\right) 
             \frac{1}{2}\Bern\left(\frac{ma_v(P)}{n_v}\right)
             \log\Norm_{K/\QQ}\gp_v^{n_v} \\
  &= \sum_{m=1}^M \left(1-\frac{m}{M+1}\right) 
      \frac{1}{2\pi^2} \sum_{n=1}^\infty \frac{\cos(2\pi nma_v(P)/n_v)}{n^2}\\
  &= \frac{1}{2\pi^2} \sum_{n=1}^\infty \frac{1}{n^2}
         \sum_{m=1}^M \left(1-\frac{m}{M+1}\right) 
             \cos\left(\frac{2\pi nma_v(P)}{n_v}\right) \\*
  &= \frac{1}{2\pi^2} \sum_{n=1}^\infty \frac{1}{n^2} \left(
     \frac{1}{2(M+1)}\biggl|\sum_{m=0}^M e^{2\pi inma_v(P)/n_v}\biggr|^2 
           - \frac{1}{2} \right).
\end{align*}
We split the sum over~$n$ into two pieces. If $n$ is a multiple of~$n_v$,
then the quantity between the absolute value signs is equal to~$M+1$,
and if~$n$ is not a multiple of~$n_v$, we simply use the fact that
the absolute value is non-negative. This yields the local estimate
\begin{align*}
  \sum_{m=1}^M &\left(1-\frac{m}{M+1}\right)\lhat(mP;v)\\
  &\ge  \left(\frac{1}{4\pi^2(M+1)} \sum_{n=1}^\infty \frac{(M+1)^2}{(nn_v)^2} 
     - \frac{1}{4\pi^2} \sum_{n=1}^\infty \frac{1}{n^2} 
         \right)\log\Norm_{K/\QQ}\gp_v^{n_v} \\
  &= \left(\frac{(M+1)}{24n_v^2} - \frac{1}{24}\right)
       \log\Norm_{K/\QQ}\gp_v^{n_v}.
\end{align*}
\par
We next sum the local heights over all primes dividing~$\gD_1$,
\begin{align*}
  \sum_{\gp_v\mid\gD_1}
     \sum_{m=1}^M \left(1-\frac{m}{M+1}\right)&\lhat(mP;v)\\*
  &\ge \frac{1}{24}
    \sum_{\gp_v\mid\gD_1} \left(\frac{(M+1)}{n_v} - n_v\right)
       \log\Norm_{K/\QQ}\gp_v .
\end{align*}
We set
\[
  M+1 = 2 {\dsum_{\gp_v\mid\gD_1} n_v\log\Norm_{K/\QQ}\gp_v}
          \bigg/
        {\dsum_{\gp_v\mid\gD_1} n_v^{-1}\log\Norm_{K/\QQ}\gp_v},
\]
which gives the height estimate
\begin{align*}
  \sum_{\gp_v\mid\gD_1}
     \sum_{m=1}^M \left(1-\frac{m}{M+1}\right)\lhat(mP;v)
  &\ge \frac{1}{24}
    \sum_{\gp_v\mid\gD_1}  n_v \log\Norm_{K/\QQ}\gp_v \\
  &=  \frac{1}{24}
    \sum_{\gp_v\mid\gD_1} \log\left|\Norm_{K/\QQ}\gD(E/K)\right|_v^{-1} .
\end{align*}
We also need to estimate the size of~$M$. This is done using
the elementary inequality 
\begin{equation}
  \label{eqn:jensen}
  \biggl(\sum_{i=1}^n a_ix_i\biggr)\biggl(\sum_{i=1}^n a_ix_i^{-1}\biggr)
  \ge \biggl(\sum_{i=1}^n a_i\biggr)^2,
\end{equation}
valid for all $a_i,x_i>0$. (This is a special case of Jensen's
inequality, applied to the function~$1/x$.)
Applying~\eqref{eqn:jensen} with $x_i=n_v$
and~$a_i=\log\Norm_{K/\QQ}\gp_v$ yields
\[
  M+1 \le 
   2 \left(\frac{\dsum_{\gp_v\mid\gD_1} 
         n_v\log\Norm_{K/\QQ}\gp_v}
      {\dsum_{\gp_v\mid\gD_1} 
         \log\Norm_{K/\QQ}\gp_v}\right)^2  
    =  2\s(\gD_1)^2 = 2\s_J\bigl(\gD(E/K)\bigr)^2.
\]
In particular, the chosen value of~$M$ is bounded solely in terms
of $\s_J\bigl(\gD(E/K)\bigr)$.
\par
We now combine the estimates for the local heights to obtain
\begin{align*}
  \sum_{m=1}^M& \left(1-\frac{m}{M+1}\right)\hhat(mkP)\\
  &\ge \sum_{m=1}^M \left(1-\frac{m}{M+1}\right) \biggl(
       \sum_{v\in M_K^\infty}+\sum_{\gp_v\mid\gD(E/K)} \biggr) \lhat(mkP;v) \\
  &= \biggl(\sum_{v\in M_K^\infty}+\sum_{\gp_v\mid\gD_1}+\sum_{\gp_v\mid\gD_2}
         \biggr)
     \sum_{m=1}^M \left(1-\frac{m}{M+1}\right)\lhat(mkP;v) \\
  &\ge \sum_{v\in M_K^\infty} \sum_{m=1}^M \left(1-\frac{m}{M+1}\right)
              \left(c_1\log\max\bigl\{ |j(E)|_v , 1 \bigr\}  -  c_2\right) \\*
  &\qquad{}+
    \frac{1}{24}\sum_{\gp_v\mid\gD_1}
          \log\bigl|\Norm_{K/\QQ}\gD(E/K)\bigr|_v^{-1} \\*
  &\qquad{}+ \sum_{\gp_v\mid\gD_2}\sum_{m=1}^M \left(1-\frac{m}{M+1}\right)
     c_3 \log\bigl|\Norm_{K/\QQ}\gD(E/K)\bigr|_v^{-1} \\*
  &\ge c_4h\bigl(j(E)\bigr) + c_5\log\Norm\gD(E/K) - c_6.
\end{align*}
In the last line we have used the fact that $\gD(E/K)j(E)$ is integral,
so
\[  
  \sum_{v\in M_K^\infty} \log\max\bigl\{ |j(E)|_v , 1 \bigr\}
  + \sum_{\gp_v\mid\gD_1\gD_2} \log\bigl|\Norm_{K/\QQ}\gD(E/K)\bigr|_v^{-1}
  \ge h\bigl(j(E)\bigr).
\]
On the other hand, 
\begin{align*}
  \sum_{m=1}^M \left(1-\frac{m}{M+1}\right)\hhat(mkP)
  &= \sum_{m=1}^M \left(1-\frac{m}{M+1}\right)m^2k^2\hhat(P)\\*
  &= \frac{k^2M(M+1)(M+2)}{12}\hhat(P).
\end{align*}
Adjusting the constants yet again yields
\[
  \hhat(P) \ge 
  \frac{c_7h\bigl(j(E)\bigr) + c_8\log\Norm_{K/\QQ}\gD(E/K) - c_9}{k^2M^3}
  \ge \frac{c_{10}h(E/K)-c_9}{k^2M^3}.
\]
Since~$M$ depends only on~$\s_J\bigl(\gD(E/K)\bigr)$ and since
$k\le(6M)^{J+d}$, this gives the desired lower bound for~$\hhat(P)$.
\end{proof}

\begin{remark}
As in~\cite{MR2259240}, a similar argument can be used to prove
that $\#E(K)_\tors$ is bounded by a constant that depends only
on~$[K:\QQ]$,~$J$, and~$\s_J\bigl(\Disc(E/K)\bigr)$.
\end{remark}

\section{Some elementary properties of the prime-depleted Szpiro ratio}
\label{section:properties}

We start with an elementary inequality.

\begin{lemma}
\label{lemma:ineq}
Let $n\ge2$, and let $\a_1,\ldots,\a_n$ and $x_1,\ldots,x_n$ be
positive real numbers, labeled so that $\a_n=\max\a_i$. Then
\[
  \frac{\a_1x_1+\cdots+\a_nx_n}{x_1+\cdots+x_n}
  \ge
  \frac{\a_1x_1+\cdots+\a_{n-1}x_{n-1}}{x_1+\cdots+x_{n-1}},
\]
with strict inequality unless $\a_1=\cdots=\a_n$.
\end{lemma}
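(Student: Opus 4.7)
The plan is to prove the inequality by direct cross-multiplication, reducing to a one-line bound. Set
\[
  A = \a_1 x_1 + \cdots + \a_{n-1} x_{n-1}
  \quad\text{and}\quad
  X = x_1 + \cdots + x_{n-1},
\]
so that the inequality to be proved takes the form
\[
  \frac{A + \a_n x_n}{X + x_n} \ge \frac{A}{X}.
\]
Since $X$ and $X + x_n$ are positive, clearing denominators shows this is equivalent to $X(A + \a_n x_n) \ge A(X + x_n)$, which simplifies to $\a_n X x_n \ge A x_n$. As $x_n > 0$, the assertion becomes simply
\[
  \a_n X \ge A,
  \quad\text{i.e.,}\quad
  \a_n \sum_{i=1}^{n-1} x_i \ge \sum_{i=1}^{n-1} \a_i x_i.
\]

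This last inequality is immediate from the hypothesis $\a_n = \max_i \a_i$, since termwise $\a_n x_i \ge \a_i x_i$ for $i = 1,\ldots,n-1$, with equality if and only if $\a_i = \a_n$. Summing gives the bound, with strict inequality unless $\a_1 = \cdots = \a_{n-1} = \a_n$, which handles the equality case claimed in the statement.

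There is no real obstacle here; the only point requiring mild care is the equality analysis, which follows by tracking when the termwise bound $\a_n x_i \ge \a_i x_i$ is strict. Since each $x_i > 0$, equality in the summed inequality forces $\a_i = \a_n$ for every $i < n$, matching the hypothesis under which the lemma allows equality.
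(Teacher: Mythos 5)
Your proof is correct and is essentially the paper's argument: both clear denominators and reduce to the termwise bound $\a_n x_i \ge \a_i x_i$, differing only in whether $A$ and $X$ denote the partial sums (your convention) or the full sums (the paper's). The equality analysis also matches.
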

\begin{proof}
Let $A=\sum_{i=1}^n\a_ix_i$ and $X=\sum_{i=1}^nx_i$. Then
\begin{align}
  \label{eqn:AXxn}
  A(X-x_n)-(A-\a_nx_n)X 
  &= (\a_n X- A)x_n \notag \\*
  &= \biggl(\sum_{i=1}^n (\a_n-\a_i)x_i\biggr)x_n \ge 0.
\end{align}
Hence
\begin{equation}
  \label{eqn:AXAanxn}
  \frac{A}{X} \ge \frac{A-\a_nx_n}{X-x_n},
\end{equation}
and since the~$x_i$ are assumed to be positive,
inequalities~\eqref{eqn:AXxn} and~\eqref{eqn:AXAanxn} are strict
unless the~$\a_i$ are all equal.
\end{proof}

We apply the lemma to prove some basic properties of the $J$-depleted
Szpiro ratio.

\begin{proposition}
\label{proposition:sproperties}
Let $J\ge1$.
\begin{parts}
\Part{(a)}
For all integral ideals~$\gD$,
\[
  \s_{J-1}(\gD) \ge \s_{J}(\gD).
\]
Further, the inequality is strict unless~$\gD$ has the
form $\gD=\gI^e$ for a squarefree ideal~$\gI$.
\Part{(b)}
Assume that $\nu(\gD)\ge J$. Then there exists an ideal $\gd\mid\gD$
satisfying
\[
  \nu(\gd)=J\qquad\text{and}\qquad \s_J(\gD)=\s(\gD/\gd).
\]
\Part{(c)}
Let~$\gp$ be a prime ideal and~$\gD$ an ideal
with~$\gp\notdivide\gD$. Then
\[
  \s_J(\gD)
  \ge \s_J(\gp^e\gD) 
  \ge \frac{\s_J(\gD)}{\log\Norm_{K/\QQ}\gp}.
\]
\Part{(d)}
Let~$\gp$ be a prime ideal and let~$\gD$ an arbitrary ideal 
\textup(so~$\gp$ is allowed to divide~$\gD$\textup). Then
\[
  (\log\Norm_{K/\QQ}\gp) \s_J(\gD)
  \ge \s_J(\gp^e\gD) 
  \ge \frac{\s_J(\gD)}{\log\Norm_{K/\QQ}\gp}.
\]
\end{parts}
\end{proposition}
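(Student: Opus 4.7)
All four parts are elementary consequences of Lemma~\ref{lemma:ineq} (applied with $\a_i = e_i$ and $x_i = \log\Norm_{K/\QQ}\gp_i$), combined with some weighted-average bookkeeping. The underlying mechanism is that removing the prime of largest exponent from a subset weakly decreases its Szpiro ratio, strictly unless the exponents on that subset all agree. I would prove the parts in the order (b), (a), (c), (d).

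For (b), given any subset $I \subseteq \{1,\dots,\nu(\gD)\}$ with $|I| > \nu(\gD) - J$, apply Lemma~\ref{lemma:ineq} to remove the index $n \in I$ with maximal $e_n$; the Szpiro ratio weakly decreases. Iterating until $|I| = \nu(\gD) - J$ shows the minimum defining $\s_J(\gD)$ is always attained at that size, which is exactly (b). Part (a) is then immediate, since $\s_J$ minimizes over a strictly larger family than $\s_{J-1}$, so $\s_J(\gD) \le \s_{J-1}(\gD)$. For the strict case, if $\gD \ne \gI^e$ with $\gI$ squarefree then the exponents $e_i$ are not all equal, so applying the reduction step of (b) once more to a minimizer of $\s_{J-1}$ lands in the strict case of the lemma, giving $\s_{J-1}(\gD) > \s_J(\gD)$.

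For (c) and (d), write $\gD' = \gp^e\gD$ and let $e'$ denote the exponent of $\gp$ in $\gD'$. The analysis splits every relevant subset according to whether it contains $\gp$. For the upper bound, start from the minimizer $I^*$ for $\s_J(\gD)$ provided by (b) and form $I^* \cup \{\gp\}$, which has the correct cardinality to appear in the minimum defining $\s_J(\gD')$; its Szpiro ratio is the weighted average of $e'$ and $\s_J(\gD) = \s(I^*)$ with weights $\log\Norm_{K/\QQ}\gp$ and $\sum_{i \in I^*}\log\Norm_{K/\QQ}\gp_i$. Bounding this average by $\s_J(\gD)$ (in (c), where $\gp \nmid \gD$ so $e' = e$) or by $\log\Norm_{K/\QQ}\gp \cdot \s_J(\gD)$ (in (d)) is a direct computation that tracks the contribution of $e'$. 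For the lower bound, a minimizer $I'$ for $\s_J(\gD')$ either excludes $\gp$ -- in which case it is a subset of the primes of $\gD$ of size $\ge \nu(\gD)+1-J$, hence $\s(I') \ge \s_{J-1}(\gD) \ge \s_J(\gD)$ by (a) -- or includes $\gp$, in which case $\s(I')$ is a weighted average of $e'$ and $\s(I' \setminus \{\gp\})$ whose denominator exceeds that of $\s(I' \setminus \{\gp\})$ by exactly $\log\Norm_{K/\QQ}\gp$, which explains the factor in the lower bound.

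The main obstacle is the weighted-average bookkeeping in (c) and (d): one must track carefully when the $\log\Norm_{K/\QQ}\gp$ factor is unavoidable and when it can be absorbed, and invoke part (a) at the right moment to interchange $\s_J$ and $\s_{J-1}$. The clean bound in (c) depends essentially on $\gp \nmid \gD$, which ensures that $I^* \cup \{\gp\}$ is a genuine enlargement rather than merely an increase of an existing exponent; relaxing this hypothesis is exactly what forces the weaker upper bound in (d).
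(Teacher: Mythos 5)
The overall strategy is close to the paper's, and the ordering you chose — proving (b) first by iterating Lemma~\ref{lemma:ineq} and then deducing (a) from it — is in fact a nice structural simplification: the paper proves (a) first and then has its proof of (b) lean on the \emph{strict} inequality in (a), whereas your route keeps (b) self-contained. Your strictness argument in (a) has the same soft spot as the paper's: removing the max-exponent index from a minimizer of $\s_{J-1}$ gives strict decrease only when the exponents \emph{within that minimizing subset} are not all equal, which is not quite the same as the exponents of $\gD$ not all being equal; you, like the paper, elide this. But that is not the main issue.

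The real gap is in your upper bound for (c). You take the minimizing set $I^*$ for $\s_J(\gD)$, adjoin $\gp$, and claim that the resulting ratio, which is the weighted average of $e$ and $\s_J(\gD)$ with weights $\log\Norm_{K/\QQ}\gp$ and $\sum_{i\in I^*}\log\Norm_{K/\QQ}\gp_i$, is bounded by $\s_J(\gD)$. A weighted average of $e$ and $\s_J(\gD)$ is $\le \s_J(\gD)$ precisely when $e \le \s_J(\gD)$, and nothing forces that; for instance with $\gD = 2^3\cdot 5^2$, $J=1$, $\gp=(3)$, $e=4$ one has $\s_1(\gD)=2$ while $e=4$, so the adjoined set $\{2^3,3^4,5^2\}$ has ratio well above $2$, and indeed every admissible subset for $\s_1(\gp^e\gD)$ does too. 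The paper's upper bound for (c) is obtained by a genuinely different move: it discards $\gp$ itself together with $J-1$ further primes, so the surviving subset is an admissible competitor for $\s_{J-1}(\gD)$, giving $\s_J(\gp^e\gD)\le\s_{J-1}(\gD)$; it does \emph{not} attempt the comparison with $\s_J(\gD)$ by way of $I^*\cup\{\gp\}$. You should adopt that move, and you may then want to double-check exactly which inequality (involving $\s_{J-1}(\gD)$ or $\s_J(\gD)$) actually survives the argument. Your reduction of (d) to (c) by writing $\gD=\gp^i\gD'$ and applying (c) twice, on the other hand, is the same as the paper's.
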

\begin{proof}
(a)\enspace
Write $\gD=\prod \gp_i^{e_i}$. To ease notation, we let
\[
  q_i = \log\Norm_{K/\QQ}\gp_i.
\]
If $\nu(\gD)\le J-1$, then~$\s_{J-1}(\gD)=\s_J(\gD)=1$, so there is
nothing to prove.  Assume now that $\nu(\gD)\ge J$.
Let $I\subset\bigl\{1,2,\ldots,\nu(\gD)\bigr\}$ be a set
of indices with $\#I\ge\nu(\gD)-(J-1)$ satisfying
\[
  \s_{J-1}(\gD) = \sum_{i\in I} e_iq_i \bigg/ \sum_{i\in I} q_i.
\]
Let $k\in I$ be an index satisfying $e_k=\max\{ e_i:i\in I\}$.
Then Lemma~\ref{lemma:ineq} with $\a_i=e_i$ and $x_i=q_i$ yields
\[
  \s_{J-1}(\gD) 
  = \frac{\dsum_{i\in I} e_iq_i}{\dsum_{i\in I} q_i}
  \ge \frac{\dsum_{i\in I,\,i\ne k} e_iq_i}{\dsum_{i\in I,\,i\ne k} q_i}
  \ge \s_J(\gD).
\]
Further, Lemma~\ref{lemma:ineq} says that the  inequality is strict
unless all of the~$e_i$ are equal, in which case~$\gD$ is a power of
a squarefree ideal.
\ProofPart{(b)}
If~$\gD=\gI^e$ is a power of a squarefree ideal, then
$\s_J(\gD)=\s(\gD/\gc^e)$ for every ideal $\gc\mid\gI$
satisfying~$\nu(\gc)=J$, so the assertion to be proved is
clear.  We may thus assume that~$\gD$ is not a power of a squarefree
ideal.
\par
Suppose in this case that $\s_J(\gD)=\s(\gD/\gd)$ for some
$\gd\mid\gD$ with $\nu(\gd)\le J-1$. Then
\[
  \s_{J-1}(\gD) \le \s(\gD/\gd) = \s_J(\gD),
\]
contradicting the fact proven in~(a) that $\s_{J-1}(\gD)>\s_J(\gD)$ (strict
inequality).
\ProofPart{(c)}
We always have
\[
  \s_J(\gp^e\gD) \le \s_{J-1}(\gD),
\]
since in computing~$\s_J(\gp^e\gD)$, we can always remove~$\gp$
and~$J-1$ other primes from~$\gD$. If this inequality is an equality,
we're done.  Otherwise the value of~$\s_J(\gp^e\gD)$ is obtained by
removing~$J$ primes from~$\gD$.  Continuing with the notation from~(a)
and letting $q=\log\Norm_{K/\QQ}\gp$, this means that there is an
index set~$I$ with $\#I\ge\nu(\gD)-J$ such that
\[
  \s_{J}(\gD) 
  = \frac{eq + \dsum_{i\in I} e_iq_i }{ q + \dsum_{i\in I} q_i}
  \ge \frac{q + \dsum_{i\in I} e_iq_i }{ q + \dsum_{i\in I} q_i}
  = \frac{q+X}{q+Y},
\]
where to ease notation, we write~$X$ and~$Y$ for the indicated sums.
\par
If $Y=0$, then also~$X=0$ and $\nu(\gD)\le J$, so $\s_J(\gp^e\gD)$ 
equals either~$e$ or~$1$. In either case, it is greater than~$\s_J(\gD)=1$.
So we may assume that~$Y>0$, which implies that $Y\ge\log2$.
\par
We observe that
\[
  \frac{X}{Y} = \frac{\dsum_{i\in I} e_iq_i }{  \dsum_{i\in I} q_i}
  \ge \s_J(\gD).
\]
Hence
\[
  \s_J(\gD) = \frac{X}{Y}\cdot\frac{1+q/X}{1+q/Y}
  \ge \frac{\s_J(\gD)}{1+q/Y} \ge \frac{\s_J(\gD)}{3q}.
\]
(The final inequality is true since $q\ge\log2$ and $Y\ge\log2$.)
This proves that~$\s_J(\gD)$ is greater than the smaller of~$\s_{J-1}(\gD)$
and~$\s_J(\gD)/3q$. But from~(a) we have~$\s_{J-1}(\gD)\ge\s_J(\gD)$, so
the latter is the minimum.
\ProofPart{(d)}
Let $\gD=\gp^i\gD'$ with $\gp\notdivide\gD'$. Then 
writing $q=\log\Norm_{K/\QQ}\gp$ as usual and
applying~(c)
several times, we have
\[
  \s_J(\gp^e\gD) = \s_J(\gp^{e+i}\gD')
  \le \s_J(\gD') 
  \le q\s_J(\gp^i\gD')
  = q\s_J(\gD).
\]
Similarly
\[
  \s_J(\gp^e\gD) = \s_J(\gp^{e+i}\gD')
  \ge \frac{\s_J(\gD')}{q}
  \ge \frac{\s_J(\gp^i\gD')}{q}  
  = \frac{\s_J(\gD)}{q}.
\]
\end{proof}

\section{The Prime-Depleted Szpiro and $ABC$ Conjectures}
\label{section:abc}

In this section we describe a prime-depleted variant of the
$ABC$-conjecture and show that it is a consequence of the
prime-depleted Szpiro conjecture.  For ease of notation, we restrict
attention to $K=\QQ$ and leave the generalization to arbitrary fields
to the reader.

\begin{conjecture}[Prime-Depleted $ABC$-conjecture]
There exist an integer~$J\ge0$ and a constant~$C_5$ such that 
if~$A,B,C\in\ZZ$ are integers satisfying
\[
  A+B+C=0\qquad\text{and}\qquad\gcd(A,B,C)=1,
\]
then
\[
  \s_J(ABC) \le C_5.
\]
\end{conjecture}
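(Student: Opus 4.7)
The plan is to mimic the classical reduction from Szpiro's conjecture to the $ABC$-conjecture via the Frey curve, while tracking prime depletion throughout. Given integers $A,B,C$ with $A+B+C=0$ and $\gcd(A,B,C)=1$, I would associate the Frey elliptic curve
\[
  E_{A,B}: y^2 = x(x-A)(x+B)
\]
defined over $\QQ$. The discriminant of this (not necessarily minimal) Weierstrass model is $16(ABC)^2$. I would then pass to a minimal Weierstrass model over $\ZZ$ and use the classical analysis of the Frey curve to describe the minimal discriminant $\Disc(E_{A,B}/\QQ)$: at every odd prime $p$, the above model is already minimal, so $\ord_p\Disc(E_{A,B}/\QQ) = 2\ord_p(ABC)$, and at $p=2$ the two quantities differ by a bounded integer. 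In particular, the set of primes dividing $\Disc(E_{A,B}/\QQ)$ equals the set of primes dividing $ABC$, up to possible insertion or removal of the prime~$2$.

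Next, I would apply the prime-depleted Szpiro conjecture to $E_{A,B}/\QQ$, obtaining $J\ge 0$ and $C_3$ such that
\[
  \s_J\bigl(\Disc(E_{A,B}/\QQ)\bigr) \le C_3
\]
uniformly in $(A,B,C)$. The key observation used to translate this into a bound on $\s_{J'}(ABC)$ is that squaring an ideal doubles its Szpiro ratio: for every integral ideal $\gI$, one has $\s(\gI^2) = 2\s(\gI)$, and the same identity holds for each $\s_J$, since the minimum in its definition ranges over the same collection of subsets of primes. Applying this to $\gI = ABC$, together with the matching of odd-prime exponents from the previous step, shows that after stripping the prime $2$ from both ideals, $\s_J\bigl(\Disc(E_{A,B}/\QQ)\bigr)$ and $2\s_J(ABC)$ coincide.

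Finally, I would use Proposition~\ref{proposition:sproperties}(d) (and, if needed, part~(c)) to absorb the bounded $2$-adic discrepancy: modifying the exponent at a single prime changes $\s_J$ by a bounded multiplicative factor, and removing that prime from consideration costs at most one additional unit of depletion. Putting these pieces together would yield $\s_{J+1}(ABC) \le C_5$ for a constant $C_5$ depending only on $C_3$, which is precisely the prime-depleted $ABC$-conjecture with depletion parameter $J+1$.

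The main obstacle is the careful bookkeeping at the prime~$2$. The Frey curve is typically non-minimal at~$2$, and its Kodaira type at~$2$ depends subtly on the $2$-adic valuations and residues of $A,B,C$; one must verify that the resulting $2$-adic discrepancy $\bigl|\ord_2\Disc(E_{A,B}/\QQ) - 2\ord_2(ABC)\bigr|$ is uniformly bounded independently of $(A,B,C)$, so that Proposition~\ref{proposition:sproperties} can be invoked with parameters that do not depend on the triple. Extending the argument to an arbitrary number field $K$ would additionally require analogous care at the finitely many primes of $K$ lying over~$2$.
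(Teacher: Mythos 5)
Your proposal follows essentially the same route as the paper: form the Frey curve, invoke the prime-depleted Szpiro bound on its minimal discriminant, and then transfer that bound to $\s_J(ABC)$ using the multiplicativity-under-squaring of the Szpiro ratio and Proposition~\ref{proposition:sproperties} to absorb the behaviour at the prime~$2$. The one place you leave open --- the uniform bound on the $2$-adic discrepancy between $\ord_2\Disc(E/\QQ)$ and $2\ord_2(ABC)$ --- is exactly what the paper settles by quoting the standard Frey-curve computation (cited as~\cite[VIII.11.3]{MR1329092}): the minimal discriminant is $2^4(ABC)^2$ or $2^{-8}(ABC)^2$, so $\Disc(E/\QQ)=2^e(ABC)^2$ with $e\in\{4,-8\}$, and in particular $(2)$ is the only prime where the exponents can differ. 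Given that, the paper applies Proposition~\ref{proposition:sproperties}(d) with $\gp=(2)$ and $\gD=(ABC)^2$ in a single step, obtaining
\[
  \s_J\bigl(\Disc(E/\QQ)\bigr)\ \ge\ \frac{\s_J\bigl((ABC)^2\bigr)}{\log 2}\ =\ \frac{2\,\s_J(ABC)}{\log 2},
\]
so no extra depletion is needed: the same~$J$ works, with $C_5 = C_3\log 2/2$. Your proposal instead hedges by allowing one additional unit of depletion, concluding $\s_{J+1}(ABC)\le C_5$. That is still a valid conclusion (the conjecture allows any~$J$), but it is slightly weaker than necessary; Proposition~\ref{proposition:sproperties}(d) already handles a single-prime perturbation without consuming a depletion slot. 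Otherwise the argument and the ingredients you identify are the same as the paper's.
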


The classical $ABC$-conjecture (with non-optimal exponent) says
that $\s(ABC)$ is bounded, which is stronger than the prime-depleted
version, since $\s_J(ABC)$ is less than or equal to~$\s(ABC)$.

\begin{proposition}
If the prime-depleted Szpiro conjecture is true, then the
prime-depleted $ABC$-conjecture is true.
\end{proposition}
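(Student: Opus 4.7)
My plan is to use the Frey curve construction together with the properties of $\s_J$ established in Section~\ref{section:properties}. Given nonzero coprime integers $A,B,C$ with $A+B+C=0$, form the Frey elliptic curve
\[
  E : y^2 = x(x-A)(x+B)
\]
over $\QQ$. A standard computation gives Weierstrass discriminant $16(ABC)^2$, and at every odd prime $p$ this model is already minimal with $E$ having either good or (when $p\mid ABC$, using $\gcd(A,B,C)=1$) multiplicative reduction. Hence $\ord_p\bigl(\Disc(E/\QQ)\bigr) = 2\ord_p(ABC)$ for every odd prime~$p$, so $\Disc(E/\QQ)$ and $(ABC)^2$ agree as ideals outside the single prime $(2)$.

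I would then invoke the prime-depleted Szpiro conjecture for $E/\QQ$, obtaining $J\ge 0$ and $C_3$ with $\s_J\bigl(\Disc(E/\QQ)\bigr)\le C_3$, and convert this into a bound on $\s_{J+1}(ABC)$. By Proposition~\ref{proposition:sproperties}(b), $\s_J\bigl(\Disc(E/\QQ)\bigr) = \s\bigl(\Disc(E/\QQ)/\gd\bigr)$ for some $\gd$ with $\nu(\gd)=J$. Either $\gd$ already absorbs the full $(2)$-part of $\Disc(E/\QQ)$, in which case the quotient is odd and one reads off $\s_{J-1}(\Disc(E/\QQ)^{\mathrm{odd}})\le C_3$; or $\gd$ is coprime to $(2)$, in which case a brief one-variable estimate (separating out the extra contribution $\delta_2\log 2$ from both numerator and denominator, where $\delta_2=\ord_2\Disc(E/\QQ)$) gives $\s_J(\Disc(E/\QQ)^{\mathrm{odd}})\le 2C_3$.

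Next, since $\Disc(E/\QQ)^{\mathrm{odd}} = \bigl((ABC)^{\mathrm{odd}}\bigr)^2$ and the Szpiro ratio scales linearly under raising to powers, $\s_J(\gD^k) = k\,\s_J(\gD)$, I would deduce $\s_J\bigl((ABC)^{\mathrm{odd}}\bigr)\le C_3$. To pass from $(ABC)^{\mathrm{odd}}$ back to $ABC$ one further deletion slot suffices: adjoining the full $(2)$-part of $ABC$ to an optimizer for $\s_J\bigl((ABC)^{\mathrm{odd}}\bigr)$ gives $\s_{J+1}(ABC)\le \s_J\bigl((ABC)^{\mathrm{odd}}\bigr)\le C_3$, establishing the prime-depleted $ABC$-conjecture with integer $J+1$ and constant $C_3$.

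The one substantive step is the second case in the first reduction, where $(2)$ is not already in the minimizing $\gd$: removing $(2)$ from the weighted mean of exponents can \emph{a priori} change the mean in either direction, since $\delta_2$ could be large. The saving grace is that $\delta_2\log 2$ enters additively in both numerator and denominator, so the change is bounded by a multiplicative factor of $1+\log 2/\log 3<2$ whenever the retained set contains any odd prime (the remaining trivial case yielding $\s=1$). All other primes behave identically on the two sides, so no further deletion slots are required.
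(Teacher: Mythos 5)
Your argument is correct, but it takes a genuinely different route from the paper's. Both proofs begin with the Frey curve and the observation that the minimal discriminant agrees with $(ABC)^2$ away from the prime $2$. Where the paper then invokes Proposition~\ref{proposition:sproperties}(d) once, with $\gp=(2)$ and $\gD=(ABC)^2$, to get $\s_J\bigl(\Disc(E/\QQ)\bigr)\ge 2\s_J(ABC)/\log 2$ in a single line, you instead pass through the odd part of the discriminant by hand: you apply Proposition~\ref{proposition:sproperties}(b) to produce a minimizing $\gd$, split into the two cases according to whether $(2)\mid\gd$, and in the second case carry out the one-variable estimate bounding the effect of deleting the $2$-adic contribution by the factor $1+\log 2/\log 3<2$. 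Your estimate is sound (the only edge cases, $\Disc^{\mathrm{odd}}/\gd=(1)$ and $\nu(\Disc)<J$, are trivial as you note), and the linear scaling $\s_J(\gD^k)=k\,\s_J(\gD)$ and the final re-insertion of the $(2)$-part via one extra deletion slot are both correct. The net effect is that you establish the prime-depleted $ABC$-conjecture with parameter $J+1$ and constant $C_3$, whereas the paper's use of part (d) keeps the parameter at $J$ with constant $C_3\log 2/2$. In short, your proof re-derives in context what part (d) packages abstractly; both are valid, and the paper's is simply the shorter path because it exploits a lemma you chose not to use.
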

\begin{proof}
We suppose  that the prime-depleted Szpiro conjecture is true, say
with~$J$ primes deleted. Let~$A,B,C\in\ZZ$ be as in the statement
of the depleted $ABC$-conjecture. We consider the Frey curve
\[
  E:y^2 = x(x+A)(x-B).
\]
An easy calculation~\cite[VIII.11.3]{MR1329092} shows that the minimal
discriminant of~$E$ is either $2^4(ABC)^2$ or $2^{-8}(ABC)^2$, so in
any case we can write~$\Disc(E/\QQ)=2^e(ABC)^2$ for some
exponent~$e\in\ZZ$. Then using Proposition~\ref{proposition:sproperties}
we find that
\[
  \s_J\bigl(\Disc(E/\QQ)\bigr) 
  = \s_J\bigl(2^e(ABC)^2\bigr)
  \ge \frac{\s_J\bigl((ABC)^2\bigr)}{\log2}
  = \frac{2\s_J(ABC)}{\log 2}.
\]
Hence the boundedness of~$\s_J\bigl(\Disc(E/\QQ)\bigr)$ implies the
boundedness of $\s_J(ABC)$.
\end{proof}

\begin{remark}
The Szpiro and $ABC$-conjectures have many important consequences,
including asymptotic Fermat (trivial), a strengthened version of
Roth's theorem~\cite{bombieri,MR1924103}, the infinitude of
non-Wieferich primes \cite{MR961918}, non-existence of Siegel
zeros~\cite{MR1738058}, Faltings' theorem (Mordell
conjecture)~\cite{MR1141316,MR1924103},\dots.  (For a longer
list, see~\cite{nitaj}.) It is thus of interest to ask which, if any,
of these results follows from the prime-depleted Szpiro conjecture. As
far as the author has been able to determine, the answer is none of
them, which would seem to indicate that the prime-depleted Szpiro
conjecture is qualitatively weaker than the original Szpiro
conjecture.
\end{remark}





\begin{thebibliography}{10}

\bibitem{bombieri}
E.~Bombieri.
\newblock Roth's theorem and the {$abc$}-conjecture.
\newblock Preprint, ETH, Zurich, 1994.

\bibitem{MR1450488}
S.~David.
\newblock Points de petite hauteur sur les courbes elliptiques.
\newblock {\em J. Number Theory}, 64(1):104--129, 1997.

\bibitem{MR1141316}
N.~D. Elkies.
\newblock {$ABC$} implies {M}ordell.
\newblock {\em Internat. Math. Res. Notices}, (7):99--109, 1991.

\bibitem{MR1738058}
A.~Granville and H.~M. Stark.
\newblock {$abc$} implies no ``{S}iegel zeros'' for {$L$}-functions of
  characters with negative discriminant.
\newblock {\em Invent. Math.}, 139(3):509--523, 2000.

\bibitem{MR948108}
M.~Hindry and J.~H. Silverman.
\newblock The canonical height and integral points on elliptic curves.
\newblock {\em Invent. Math.}, 93(2):419--450, 1988.

\bibitem{MR1104702}
M.~Hindry and J.~H. Silverman.
\newblock On {L}ehmer's conjecture for elliptic curves.
\newblock In {\em S\'eminaire de Th\'eorie des Nombres, Paris 1988--1989},
  volume~91 of {\em Progr. Math.}, pages 103--116. Birkh\"auser Boston, Boston,
  MA, 1990.

\bibitem{LangECDA}
S.~Lang.
\newblock {\em Elliptic curves: {D}iophantine analysis}, volume 231 of {\em
  Grundlehren der Mathematischen Wissenschaften}.
\newblock Springer-Verlag, Berlin, 1978.

\bibitem{nitaj}
A.~Nitaj.
\newblock The {$abc$} conjecture home page.
\newblock \url{http://www.math.unicaen.fr/~nitaj/abc.html}.

\bibitem{Oesterle}
J.~Oesterl{\'e}.
\newblock Nouvelles approches du ``th\'eor\`eme'' de {F}ermat.
\newblock {\em Ast\'erisque}, (161-162):Exp.\ No.\ 694, 4, 165--186 (1989),
  1988.
\newblock S\'eminaire Bourbaki, Vol.\ 1987/88.

\bibitem{MR2259240}
C.~Petsche.
\newblock Small rational points on elliptic curves over number fields.
\newblock {\em New York J. Math.}, 12:257--268 (electronic), 2006.

\bibitem{MR630588}
J.~H. Silverman.
\newblock Lower bound for the canonical height on elliptic curves.
\newblock {\em Duke Math. J.}, 48(3):633--648, 1981.

\bibitem{MR961918}
J.~H. Silverman.
\newblock Wieferich's criterion and the {$abc$}-conjecture.
\newblock {\em J. Number Theory}, 30(2):226--237, 1988.

\bibitem{MR953747}
J.~H. Silverman.
\newblock Elliptic curves of bounded degree and height.
\newblock {\em Proc. Amer. Math. Soc.}, 105(3):540--545, 1989.

\bibitem{MR1312368}
J.~H. Silverman.
\newblock {\em Advanced topics in the arithmetic of elliptic curves}, volume
  151 of {\em Graduate Texts in Mathematics}.
\newblock Springer-Verlag, New York, 1994.

\bibitem{MR1329092}
J.~H. Silverman.
\newblock {\em The arithmetic of elliptic curves}, volume 106 of {\em Graduate
  Texts in Mathematics}.
\newblock Springer-Verlag, New York, second edition, 2009.

\bibitem{MR1924103}
M.~Van~Frankenhuysen.
\newblock The {$ABC$} conjecture implies {V}ojta's height inequality for
  curves.
\newblock {\em J. Number Theory}, 95(2):289--302, 2002.

\end{thebibliography}

\end{document}